\numberwithin{equation}{section}
\newtheorem{teo}{Theorem}[section]
\newtheorem{pro}[teo]{Proposition}
\theoremstyle{definition}
\newtheorem{defi}[teo]{Definition}
\newtheorem{lem}[teo]{Lemma}
\newtheorem{ejem}[teo]{Example}
\newtheorem{rem}[teo]{Remark}
\newtheorem{coro}[teo]{Corollary}
\newtheorem{exe}[teo]{Example}
\newcommand{\m}{{}^{-1}}
\newcommand{\G}{{\mathcal G}}
\title{\textbf{Partial actions of groups  on profinite spaces}}
\author{
	Luis Mart\'inez, H\'{e}ctor Pinedo and  Andr\'{e}s Villamizar\\
	\small  Escuela de Matem\'{a}ticas\\
	\small Universidad Industrial de Santander\\
	\small Cra. 27 calle 9, Bucaramanga, Colombia\\
	\small  e-mail: luchomartinez9816@hotmail.com, hpinedot@uis.edu.co, andresvillamizar1793@hotmail.com\\
}
\date{\today}
\begin{document}

	\maketitle
	\begin{abstract} We show that  for   a partial action $\eta$ with closed domain of a compact group $G$ on a profinite space $X$ the space of orbits $X/\!\sim_G$ is profinite, this leads to the fact that when $G$ is profinite the enveloping space $X_G$ is also profinite. Moreover, we provide conditions for the induced quotient map $\pi_G: X\to X/\!\sim_G$ of $\eta$ to have a continuous section.  Relations between  continuous sections of $\pi_G$ and continuous sections of  the quotient map induced by the enveloping action of $\eta$ are also considered.        At the end of this work   we  prove that the category of actions on 
profinite spaces with countable number of clopen sets is  reflective  in the category of  actions of compact
Hausdorff spaces having countable number of clopen sets.

	\end{abstract}
	\noindent
	\textbf{2020 AMS Subject Classification:} Primary  54H15. Secondary  54H11, 54E99.\\
	\noindent
	\textbf{Keywords:}  Partial action, profinite space, orbit equivalence relation, globalization, continuous section, reflective category.

	\section{Introduction}  A topological space $X$ is called 
 profinite if   there exists
an inverse system of finite discrete spaces for which its inverse limit is
homeomorphic to $X,$ equivalently,  
$X$ is profinite if it is compact Hausdorff and zero-dimensional. A topological group is profinite if it is profinite as a topological space.  Important examples of profinite groups and spaces are the group of $p$-adic integers with $p$ prime number, the Galois group on an arbitrary Galois extension, fundamental groups of connected schemes  and  the set of connected components of a compact Hausdorff space. For details on profinite spaces and profinite groups, the interested reader may consult   \cite{Magid} or \cite{RZ}. 

On the other hand, partial actions of groups  appeared in the context of $C^*$-algebras
 in  \cite{Ruy1} and \cite{Mk}, in which $C^*$-algebraic crossed products by partial automorphisms were introduced and studied by analyzing their internal structure.    Several classes of $C^*$-algebras are described as partial crossed products that correspond to partial actions of discrete groups on profinite spaces; for instance the Carlsen-Matsumoto $C^*$-algebra $\mathcal{O}_X$ of an arbitrary subshift $X$  (see \cite{DOE}) and  $C^*$-algebras of separated graphs \cite{AraE}, which is a class containing graph algebras and Cunts-Krieger algebras.

Partial actions of groups have also appeared in other contexts, such as the  theory of operator algebras, Galois cohomology, Hopf algebras, Polish spaces,  the theory of $\mathbb{R}$-trees and model theory  (see \cite{DO} and  \cite{Ruy} for a detailed account of recent developments on partial actions).  A deep application  of partial actions   is given in    \cite{AraE}  where  it was provided a counter-example  for a  conjecture of M. R{\o}rdam and  A. Sierakovski related  the  Banach-Tarski paradox.

A relevant question is whether a partial action is obtained as restriction of a corresponding collection of total maps on some superspace. In the topological context, this  is known as the globalization problem  and was studied in \cite{AB} and \cite{KL}. They showed that for any partial action $\eta=\{\eta_g\}_{g\in G}$ of a topological group $G$ on a topological space $X$ there is a topological space $Y$ and a continuous action $\mu$ of $G$ on $Y$ such that $X$ is embedded in $Y$ and $\eta$ is the restriction of  $\mu$ to $X.$  Such a space  is called a globalization of $X.$ It is also shown that there is a minimal globalization $X_G$ called the enveloping space of $X.$ However,  structural properties of $X$ are not in general inherited by $X_G,$ for instance, in \cite{Ruy2} it is shown that the enveloping space of a partial action of a countable group on a compact metric space is Hausdorff if and only if the domain of each $\eta_g$ is clopen for all $g \in  G,$ while in \cite{PU1} there were established conditions for which $X_G$ is a Polish space provided that $X$ and $G$ are. Likewise, in \cite{S}, it was proven that
the globalization of a partial action on a connected 2-complex may result in a complex which is not connected. Thus, a natural problem is to know which
properties of $X$ are preserved by $X_G.$

The present work is structured as follows: After the introduction, we provide in Section  \ref{pactions} the necessary background  and notations on (set theoretic) partial and topological partial actions, and their corresponding globalization. Also, we give some preliminary results that will be needed in the work. In Section \ref{parprof} we work with partial actions on a profinite space $X$ and present in Theorem \ref{pro3.3} a sufficient condition for the space $X/\!\sim_G$ to be profinite, where $\sim_G$ is the orbit equivalence relation determined by the partial action, as given in equation \eqref{porbit}. Later, we treat the problem of finding  a continuous section to the quotient map $\pi_G: X\to X/\!\sim_G$ and  show the existence of such a section when the group $G$ is profinite (see Theorem \ref{contgeral}) extending the classical result on continuous and free (global) actions.   It is important to notice that continuous sections have been relevant in the context of partial actions, for instance in \cite{Ruy0} continuous sections of Banach bundles play a crucial role in the characterization of  continuous twisted partial actions  of locally compact groups on $C^*$-algebras. 

The second part of Section \ref{parprof} is devoted to find relations between  continuous  sections of the quotient map $\pi_G$ and  $\Pi_G,$ respectively ( being $\Pi_G$ the corresponding quotient map induced by the globalization)  are presented in Subsection \ref{apglob}. We show how to find a continuous section of $\Pi_G$ having a continuous section of $\pi_G$.  It is important to remark that the converse does not seem to be true, that is, having a continuous section of $\Pi_G$ does not seem to imply that $\pi_G$ has a continuous section; items  (ii) and (iii)  of Proposition \ref{cor3.3}  deal with this problem (see also Proposition \ref{borel}).  We finish this work with Section  \ref{actions} which has a categorial flavor. Indeed, since the globalization problem is closely related to a reflectivity property (see  Proposition \ref{abar}), being the globalization a reflector, in  \cite{KNo} the authors studied the  problem of when the corres\-ponding reflector of the   category of  partial group actions on  sets with an algebraic structure to a subcategory of global actions  is a globalization.  In our case,  we deal with some categories related to global actions of $G$ on compact Hausdorff and profinite spaces, and show in  Theorem \ref{reft}  that the category of separately continuous actions on profinite spaces is reflective in the category of separately continuous actions on compact Hausdorff  spaces.  Under the restriction that $G$ is Hausdorff and Baire  we obtain in Proposition \ref{ref2}  that the category  whose  objects are  continuous  actions on compact Hausdorff spaces having countable clopen sets,  contains the category of   continuous  actions on profinite  spaces having countable clopen sets as a reflective subcategory.

	\section{Preliminaries on partial actions}\label{pactions} In this section  we establish our conventions on partial actions, and prove some results that will be useful throughout the work. We start with  the following.

\begin{defi} Let $ A, B, X, Y$  be sets. We say that  $f :A \to B$ is a partial function if
there exists $C \subseteq  A$ such that the restriction of $f$ to $C$ is a function. A partial set action
of $A$ on $X$  is a partial function $A \times X \to X$ given by $(a, x) \mapsto  a \cdot x ,$ for all  $a \in A$ and
$x \in X$ such that $a \cdot x$ is defined, which we denote by $\exists a\cdot x.$
\end{defi}

We proceed with the next.

\begin{defi} \cite[p. 87-88]{KL} Let $G$ be a group  with identity element $1$ and $X$ be a  set. A   partial action of $G$ on $X$ is a partial set action  $\eta$ of $G$ on $X$ such that for  each $g,h\in G$ and $x\in X$ the following assertions hold:
	\begin{enumerate}
		\item [(PA1)] If $\exists g\cdot x$, then $\exists g^{-1}\cdot (g\cdot x)$ and $g^{-1}\cdot (g\cdot x)=x$,
		\item [(PA2)] If $\exists g\cdot(h\cdot x)$, then $\exists (gh)\cdot x$ and $g\cdot(h\cdot x)=(gh)\cdot x$,
		\item [(PA3)] $\exists 1\cdot x$ and $1\cdot x=x.$
	\end{enumerate}
 It is said that $\eta$ {\it acts}  on $X$ if $\exists g\cdot x,$ for all $(g,x)\in G\times X.$
\end{defi}

Given a partial action  $\eta$  of $G$ on $X,$  $g\in G, x\in X$ and $ U\subseteq X;$ we set:
\begin{itemize} 
\item   $G*U=\{(g,u)\in G\times U\mid \exists g\cdot u\}.$ In particular, $G\ast X$ is the domain of $\eta.$

\item  $X_g=\{x\in X\mid \exists \,g\m\cdot x \}.$
\end{itemize}	
Then $\eta$ induces a family of bijections $\{\eta_g\colon X_{g\m}\ni x\mapsto g\cdot x\in X_g \}_{g\in G},$  such that $\eta_1$ is the identity of $X$ and $\eta_{g\m}=\eta\m_g.$ We also denote this family  by $\eta.$
The following result characterizes partial actions in terms of a family of bijections.
\begin{pro} {\rm  \cite[Lemma 1.2]{QR} \label{fam} A partial action $\eta$ of $G$ on $X$ is a family $\eta=\{\eta_g\colon X_{g\m}\to X_g\}_{g\in G},$ where $X_g\subseteq X,$  $\eta_g\colon X_{g\m}\to X_g$ is bijective for  all $g\in G;$ such that:
\begin{itemize}
\item[(i)]$X_1=X$ and $\eta_1=\rm{id}_X,$
\item[(ii)]  $\eta_g( X_{g\m}\cap X_h)=X_g\cap X_{gh},$
\item[(iii)] $\eta_g\eta_h\colon X_{h\m}\cap  X_{ h\m g\m}\to X_g\cap X_{gh},$ and $\eta_g\eta_h=\eta_{gh}$ in $ X_{h\m}\cap  X_{ h\m g\m},$
\end{itemize}
for all $g,h\in G.$}
\end{pro}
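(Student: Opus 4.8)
The plan is to read the statement as an equivalence between the axiomatic description of a partial action via (PA1)--(PA3) and the description as a family of compatible bijections satisfying (i)--(iii), and to prove both implications. Throughout I will use the dictionary already set up before the statement, namely $\exists g\cdot x \Longleftrightarrow x\in X_{g\m}$ together with $g\cdot x=\eta_g(x)$, and the fact that each $\eta_g\colon X_{g\m}\to X_g$ is a bijection with inverse $\eta_{g\m}$ (injectivity comes straight from (PA1), while surjectivity follows by applying (PA1) to $g\m$ and noting that $x=g\m\cdot y$ satisfies $\eta_g(x)=y$ whenever $\exists g\m\cdot y$).

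First I would isolate the single computational lemma that drives everything: if $\exists h\cdot x$ and $\exists(gh)\cdot x$, then $\exists g\cdot(h\cdot x)$ and $g\cdot(h\cdot x)=(gh)\cdot x$. To see this, set $v=h\cdot x$, so that $h\m\cdot v=x$ by (PA1); then $\exists(gh)\cdot x=(gh)\cdot(h\m\cdot v)$, and applying (PA2) to the elements $gh$, $h\m$ and the point $v$ yields $\exists g\cdot v$ together with $g\cdot v=(gh)\cdot(h\m\cdot v)=(gh)\cdot x$. This is the step I expect to be the crux, since it is the only place where (PA1) and (PA2) must genuinely be combined, and keeping the order of the group elements straight is exactly where an error would creep in.

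With this lemma in hand the forward direction becomes routine. Property (i) is immediate from (PA3). For (ii), the inclusion $\eta_g(X_{g\m}\cap X_h)\subseteq X_g\cap X_{gh}$ follows by taking $y=g\cdot x$ with $\exists g\cdot x$ and $\exists h\m\cdot x$, writing $x=g\m\cdot y$ and using (PA2) to produce $\exists(h\m g\m)\cdot y=\exists(gh)\m\cdot y$; the reverse inclusion is precisely an application of the lemma (with $h\m,g\m,y$ in place of $g,h,x$) to split $\exists(gh)\m\cdot y$ as $\exists h\m\cdot(g\m\cdot y)$. For (iii) I would first identify the natural domain of $\eta_g\eta_h$ as $\{x\in X_{h\m}\mid \eta_h(x)\in X_{g\m}\}$ and rewrite it, via (ii) applied to $\eta_{h\m}$, as $X_{h\m}\cap X_{h\m g\m}$; the equality $\eta_g\eta_h=\eta_{gh}$ on that set is then again just the lemma, and the codomain $X_g\cap X_{gh}$ drops out from (ii) applied to $\eta_{gh}$.

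For the converse I would start from a family $\{\eta_g\colon X_{g\m}\to X_g\}$ satisfying (i)--(iii), declare $g\cdot x$ to be defined exactly when $x\in X_{g\m}$ with value $\eta_g(x)$, and check (PA1)--(PA3). Here (PA3) is simply (i); (PA1) follows because $\eta_g(x)\in X_g=X_{(g\m)\m}$ and $\eta_{g\m}\eta_g=\eta_1=\mathrm{id}$ on $X_{g\m}$ by (iii); and (PA2) follows because, whenever $\exists g\cdot(h\cdot x)$, condition (ii) forces $x\in X_{h\m}\cap X_{h\m g\m}=X_{(gh)\m}$, so $\exists(gh)\cdot x$, while (iii) gives $\eta_g\eta_h=\eta_{gh}$ there, that is $g\cdot(h\cdot x)=(gh)\cdot x$. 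Apart from the lemma, the only care needed is to keep the membership conditions $x\in X_{g\m}$ and the intersection identities (ii) aligned with the existence predicates, which is bookkeeping rather than a genuine difficulty.
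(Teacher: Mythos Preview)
Your argument is correct, but there is nothing to compare it against: the paper does not supply its own proof of this proposition, it merely cites \cite[Lemma 1.2]{QR} and records the characterization for later use. Your write-up is a clean, self-contained verification of the equivalence (the isolated ``key lemma'' is indeed the only nontrivial point), so it would serve perfectly well as a proof were one needed here.
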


Given $x\in X$ and $U\subseteq X$ we set  $G^x=\{g\in G\mid \exists g\cdot x\}$   
 and $G^U\cdot U=\{g\cdot u\mid  u\in U, g\in G^u\}.$ The set $U$ is called {\it $G$-invariant} if $G^U\cdot U\subseteq U.$

We have the next.
	\begin{lem}\label{lem1.1.1}
		Let $\eta$ be  a  partial action of $G$ on $X$ and $U$ a nonempty subset of $X$,  then the following statements are true:
\begin{enumerate}
\item [(i)]  $\eta(G*U)=G^U\cdot U$,
\item [(ii)] $G^U\cdot U$ and its complement are $G$-invariant.
			
		\end{enumerate}  
	\end{lem}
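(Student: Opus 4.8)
The plan is to treat (i) as a pure unwinding of definitions and to reserve the actual work for (ii). For (i), recall that $\eta\colon G*X\to X$ is the partial action viewed as the function $(g,x)\mapsto g\cdot x$, so $\eta(G*U)$ is the image of $G*U=\{(g,u)\in G\times U\mid \exists g\cdot u\}$; that is, $\eta(G*U)=\{g\cdot u\mid u\in U,\ \exists g\cdot u\}$. On the other hand $G^U\cdot U=\{g\cdot u\mid u\in U,\ g\in G^u\}$, and since $g\in G^u$ means exactly $\exists g\cdot u$, the two descriptions of the set coincide. Thus (i) holds with no appeal to the partial-action axioms, and it identifies $\eta(G*U)$ with the set whose invariance we must check in (ii).

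For the first half of (ii) I would show that $V:=G^U\cdot U$ is closed under the action. Take $v\in V$ and $h\in G^v$, and write $v=g\cdot u$ with $u\in U$ and $g\in G^u$, so that $\exists h\cdot(g\cdot u)$. Axiom (PA2) then gives $\exists (hg)\cdot u$ together with $h\cdot v=h\cdot(g\cdot u)=(hg)\cdot u$; since $u\in U$ and $hg\in G^u$, this exhibits $h\cdot v$ as an element of $V$. Hence $G^V\cdot V\subseteq V$, which is exactly $G$-invariance of $V$ (the reverse inclusion $V\subseteq G^V\cdot V$ is automatic, taking $g=1\in G^u$ with $1\cdot u=u$ by (PA3)).

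For the complement $W:=X\setminus V$ I would argue by contradiction, reusing the invariance of $V$ just proved together with the reversibility in (PA1). Take $w\in W$ and $h\in G^w$, and suppose toward a contradiction that $h\cdot w\in V$. Because $\exists h\cdot w$, axiom (PA1) yields $\exists h\m\cdot(h\cdot w)$ with $h\m\cdot(h\cdot w)=w$; in particular $h\m\in G^{h\cdot w}$. Applying the $G$-invariance of $V$ to the point $h\cdot w\in V$ gives $h\m\cdot(h\cdot w)\in V$, i.e. $w\in V$, contradicting $w\in W$. Therefore $h\cdot w\in W$, so $G^W\cdot W\subseteq W$ and $W$ is $G$-invariant.

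I expect the complement to be the only genuinely delicate point. A direct attempt to verify invariance of $W$ founders because nothing forces a product like $(hg)\cdot w$ to stay outside $V$; the clean route is instead to use that (PA1) lets one \emph{undo} the action, so that any point escaping $W$ into $V$ would, upon being pulled back, force its preimage into $V$ as well. The interplay of the two axioms—(PA2) to push forward into $V$ and (PA1) to pull back—is what drives the whole argument.
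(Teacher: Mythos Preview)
Your proof is correct and follows the same overall architecture as the paper's: part (i) is unwound from the definitions, invariance of $V=G^U\cdot U$ comes from (PA2), and invariance of the complement is obtained by contradiction.

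The only noteworthy difference is in how the contradiction for the complement is reached. The paper does not bootstrap from the invariance of $V$; instead, given $y=\eta_g(x)$ with $x\in B$ and assuming $y=\eta_h(m)$ for some $m\in U$, it invokes the family-of-bijections characterization (Proposition~\ref{fam}\,(ii)--(iii)) to conclude $m\in X_{h^{-1}}\cap X_{h^{-1}g}$ and $\eta_{g^{-1}h}(m)=x$, thereby exhibiting $x$ directly as an element of $G^U\cdot U$. Your route is a bit more economical: having already shown $V$ is invariant, you simply apply (PA1) to pull $h\cdot w\in V$ back to $w\in V$. This avoids the detour through Proposition~\ref{fam} and makes the dependence on the axioms (PA1)--(PA2) completely transparent; the paper's version, on the other hand, is self-contained in the sense that it produces the concrete witness $m\in U$ and $g^{-1}h\in G^m$ for $x\in G^U\cdot U$ without appealing to the first half of (ii).
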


\begin{proof}
		Statement (i) is straightforward.  To show  (ii) denote  $A:=G^U\cdot U$ and $B:=X\setminus A$.  The inclusion $G^A\cdot A \subseteq A$  is a consequence of  (PA2). Now we check  $G^B\cdot B\subseteq B$.  Take $y\in G^B\cdot B$, then there  are $x\in B$ and  $g\in G^x$ such that $\eta_g(x)=y$. Suppose $y\in A$ and let be $m\in U, h\in G^m$ such that $\eta_h(m)=y$. Note that $\eta_h(m)\in X_h$ and $\eta_h(m)=y\in X_g$, then $m\in \eta_h^{-1}(X_h\cap X_g)=X_{h\m}\cap X_{h\m g}$ and by (iii) of Proposition \ref{fam}  follows that $\eta_{g^{-1}h}(m)=x$, this implies that $x\in A$ which is a contradiction, therefore $y\in B$.
	\end{proof}

From now on in this work, $G$ will denote a topological group and $X$ a topological space. We endow $G\times X$ with the product topology and $G*X$ with the subspace topology. Moreover $\eta: G*X\to X$  will denote a partial action. It is said that $\eta$ is {\it topological } if every $X_g$ is open and $\eta_g$ is a homeomorphism, $g\in G;$ if moreover $\eta$ is continuous, $\eta$ 
is  called a {\it continuous partial action}.

We proceed with the next.

\begin{lem}\label{lem1.1}
		Let $\eta$ be a partial action of $G$ on $X.$ The following assertions hold:
\begin{itemize}
\item [(i)] If $\eta$ is topological  and $U$ is an open subset of $X,$ then $G^U\cdot U$ is open.   
\item  [(ii)] If $G*X$ is clopen then $X_g$ is clopen for all $g\in G.$ 
\end{itemize}
\end{lem}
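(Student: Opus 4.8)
The plan is to reduce both parts to elementary openness and clopenness arguments, using the description of $G^U\cdot U$ as a union of images of the bijections $\eta_g$ for (i), and the fiber structure of $G*X$ for (ii).

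For (i), I would first rewrite $G^U\cdot U$ in a form adapted to the topology. By Lemma \ref{lem1.1.1}(i) we have $G^U\cdot U=\eta(G*U)$, and unpacking the definitions of $G^U$ together with the domains $X_{g\m}=\{x\in X\mid \exists\, g\cdot x\}$ (so that $g\in G^u$ is equivalent to $u\in X_{g\m}$), one checks directly that
\[
G^U\cdot U \;=\; \bigcup_{g\in G}\eta_g\bigl(U\cap X_{g\m}\bigr).
\]
Since $\eta$ is a topological partial action, each $X_{g\m}$ is open, so $U\cap X_{g\m}$ is open in $X$, hence open in $X_{g\m}$; as $\eta_g\colon X_{g\m}\to X_g$ is a homeomorphism, its image $\eta_g(U\cap X_{g\m})$ is open in $X_g$, and because $X_g$ is itself open in $X$ this image is open in $X$. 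A union of open sets being open, $G^U\cdot U$ is open.

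For (ii), the idea is to realize each domain $X_{g\m}$ as a fiber of $G*X$. For fixed $g\in G$ consider the continuous map $\iota_g\colon X\to G\times X$, $x\mapsto (g,x)$. Straight from the definition of $G*X$ one gets $\iota_g^{-1}(G*X)=\{x\in X\mid \exists\, g\cdot x\}=X_{g\m}$. If $G*X$ is clopen in $G\times X$, then its preimage under the continuous map $\iota_g$ is clopen in $X$, so $X_{g\m}$ is clopen; letting $g$ range over all of $G$ (equivalently, replacing $g$ by $g\m$) yields that $X_g$ is clopen for every $g\in G$.

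Both arguments are short, and I do not expect a genuine obstacle. The only points requiring care are the bookkeeping with the inversion $g\leftrightarrow g\m$ in the indices, and, in (i), the observation that ``open in $X_g$'' upgrades to ``open in $X$'' precisely because $X_g$ is open in $X$; this is exactly where the topological (rather than merely set-theoretic) hypothesis on $\eta$ is used, while (ii) uses only continuity of the slice maps $\iota_g$.
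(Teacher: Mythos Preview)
Your argument is correct and essentially the paper's own: for (i) you both write $G^U\cdot U=\bigcup_{g}\eta_g(U\cap X_{g\m})$ and use that each $X_g$ is open and each $\eta_g$ a homeomorphism. For (ii) the paper argues with basic open boxes in $G\times X$ to show $X_g$ is open and closed separately, which is precisely the hands-on verification of what you phrase in one stroke as ``$X_{g\m}=\iota_g^{-1}(G*X)$ with $\iota_g$ continuous and $G*X$ clopen''; same idea, slightly cleaner packaging on your side.
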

\begin{proof}
(i) It follows from the fact that for every $g\in G$ the set $X_g$ is open, $\eta_g$ is a homeomorphism  and  $G^U\cdot U=\bigcup\limits_{g\in G^U} \eta_g(U\cap X_{g\m}).$ For (ii) take $g\in G$, first let us prove that $X_g$ is closed. Indeed, if $X_g=X$ this  is clear. Otherwise, take $x\in X\setminus X_g$ and since $G*X$ is closed, there exist open  sets $U\subseteq G$ and $V\subseteq X$ such that $(g^{-1},x)\in U\times V\subseteq (G*X)^c$. Moreover, if $y\in V$ then $(g^{-1},y)\notin G*X$ and $y\notin X_g$. This shows that $X_g$ is closed.
To prove that $X_g$ is open, take $x\in X_g$ then $(g^{-1},x)\in G*X$ and there are open sets $U\subseteq G$ and $V\subseteq X$ such that $(g^{-1},x)\in U\times V\subseteq G*X$ from this we get $x\in V\subseteq X_g$ and $X_g$ is open.
\end{proof}

We end this section with the following.
\begin{pro}\label{hatcon}{\rm
	Let $\eta$ be a topological partial action of $G$ on $X$, then  the family of bijections $\hat{\eta}=\{\hat{\eta}_g: (G\times X)_{g^{-1}}\rightarrow (G\times X)_g\}_{g\in G},$  where $(G\times X)_g=G\times X_g$ and 
$$\hat{\eta}_g: G\times X_{g\m}\ni (h,x) \mapsto (hg^{-1},\eta_g(x)) \in G\times X_g,$$ is a topological  partial action of $G$ on $G\times X.$  Moreover $\eta$ is continuous if and only if $\hat{\eta}$ is continuous.}
\end{pro}

\begin{proof} The first assertion is  \cite[Example 2.3]{PU1}. Now suppose that $\eta$ is continuous, set $\alpha:G*(G\times X)\ni (g, (h,x))\mapsto  hg^{-1}\in G$ and  $\beta:G*(G\times X)\ni(g,(h,x))\mapsto \eta_g(x)\in X.$
  We show that $\alpha$ and $\beta$  are continuous, for this consider $U\subseteq G$ an open set for which $\alpha(g,(h,x))\in U$ for some $(g,(h,x))\in G*(G\times X)$. Take $V$ and $W$ open subsets of  $G$ such that $(g,h)\in V\times W, WV^{-1}\subseteq U$ and  define $Z_0:= (V\times G\times W)\cap G*(G\times X)$. It is easy to see  that $(g,(h,x))\in Z_0$ and $\alpha(Z_0)\subseteq WV^{-1}\subseteq U,$ which shows that $\alpha$ is continuous. On the other hand, let $U\subseteq X$ be an open set in $X$ such that $\beta(g,(h,x))\in U$ for some $(g,(h,x))$. Since $\eta$ is continuous there are open sets $T\subseteq G$, $H\subseteq X$ such that $(g,x)\in T\times H$ and $\eta((T\times H)\cap G*X)\subseteq U.$ Let  $Z_1:=(T\times G\times H)\cap G*(G\times X)$ then $(g,(h,x))\in Z_1$ and $\beta(Z_1)\subseteq \eta((T\times X)\cap G*X)\subseteq U$. Thus, we conclude that $\hat{\eta}$ is continuous.
	
Conversely, suppose that $\hat{\eta}$ is continuous. Let $U\subseteq X$ be  an  open set  such that $\eta(g,x)\in U$ for some $(g,x)\in G*X$. By (PA3), $(g,(1,x))\in G*(G\times X)$ and $\beta(g,(1,x))=\eta(g,x)\in U$.  Since   $\beta=\pi_2\circ \hat\eta,$ where $\pi_2 : G\times X\ni (g,x)\mapsto x\in X,$ then $\beta$ is continuous and there are open subsets  $M,N\subseteq G$ and $V\subseteq X$ such that $(g,(1,x))\in M\times N\times V$ and $\beta(Z)\subseteq U$, where $Z:= (M\times N\times V)\cap G*(G\times X)$. Define $Z':= (M\times V)\cap(G*X)$ then $(g,x)\in Z'$ and  $\eta(Z')\subseteq \beta(Z)\subseteq U,$ thus $\eta$ is continuous as desired.
\end{proof}

	\subsection{Induced partial actions and globalization}	

Let  $u$ be a continuous  action of $G$ on  a  topological space $Y$ and $X\subseteq Y$ be an open set.  For $g\in G,$ set
\begin{equation}\label{induced}X_g=X\cap u_g(X)\,\,\, \text{ and }\,\,\, \eta_g=u_g{\big|}_{X_{g\m}},\end{equation}  then $\eta \colon G* X\ni (g,x)\mapsto \eta_g(x)\in X $ is a topological partial action of $G$ on $X,$  and we say that $\eta$ is  {\it induced} by  $u$ or that $\eta$ is the {\it restriction} of $u$ to $X.$

In this case, we have 
\begin{equation}\label{indug}G^x=\{g\in G\mid u_g(x)\in X\cap u_g(X)\}, \,\,{\rm for\,\, all  } \,\,x\in X.\end{equation}

\noindent An important question in the study of partial actions is whether they can be induced by  global actions. In the topological sense, this turns out to be affirmative, and a proof was given in \cite[Theorem 1.1]{AB} and independently in \cite[Section 3.1]{KL}. We recall their construction. Let $\eta=\{\eta_g: X_{g\m}\to X_g\}_{g\in G}$ be a topological partial action of $G$ on $X.$ Define an
equivalence relation $R$ on $ G\times X$ as follows:
\begin{equation}
\label{eqgl}
(g,x)R(h,y) \Longleftrightarrow x\in X_{g\m h}\,\,\,\, \text{and}\, \,\,\, \eta_{h\m g}(x)=y,
\end{equation}
and denote by  $[g,x]$   the equivalence class of the pair $(g,x).$
Consider the {\it enveloping space}  or the {\it globalization}  $X_G=(G\times X)/R$ of $X$  endowed the quotient topology. 
Then by \cite[(iii) Proposition 3.9]{KL} the action 
\begin{equation}
\label{action}
\mu \colon G\times X_G\ni (g,[h,x])\to [gh,x]\in X_G,
\end{equation}
is continuous. The map $\mu$ is called the {\it enveloping action} of $\eta.$  Further the map
\begin{equation}
\label{iota}
\iota \colon X\ni x\mapsto [1,x]\in X_G,
\end{equation} is a continuous injection such that $G\cdot \iota(X)=X_G.$ Moreover, it follows by \cite[Theorem 1.1]{AB} that $\iota$ is an open map provided that   $G\ast X$ is open. 
In this case  $\iota: X\to \iota(X)$ is a homeomorphism and one may identify $X$ with $\iota(X).$

		\section{Partial actions on profinite spaces}\label{parprof}

Given a topological partial action of $G$ on $X,$ one defines the {\it orbit equivalence relation}  $\sim_G$ on $X$ as follows:
\begin{equation}\label{porbit}x\sim_G y\Longleftrightarrow\,\, \exists g\in G^x  \,\,{\rm such \,\,that}\,\, g\cdot x=y,\end{equation}
for each $x,y\in X$. The elements of $X/\!\sim_G$ are the {\it orbits} $G^x\cdot x$ with  $x\in X$  and $X/\!\sim_G$ is endowed with the quotient topology.  It follows by \cite[Lemma 3.2]{PU1} the {\it induced quotient map of $\eta$} 
\begin{equation}\label{qmap}\pi_G: X\ni x\mapsto G^x\cdot x \in X/\!\sim_G,\end{equation} is continuous and open.  

Our next goal is to show that $X/\!\sim_G$ is profinite as long as $X$ is profinite and $G$ compact, but first for the reader's convenience we establish the following.

\begin{defi} Two points $u$ and $v$ in $X$ can be separated if  there are disjoint open subsets $U$ and $V$ of $X$ such that  $u\in U ,$ $v\in V$ and $U\cup V=X.$
\end{defi}
It is known that any two different points in a compact space $X$ can be separated if and only if $X$ is Hausdorff and is zero-dimensional (see for instance \cite[Proposition 2.3]{Magid}).
\begin{teo} \label{pro3.3}

	Let $\eta: G* X\rightarrow X$ be a  continuous partial action of a compact group $G$ on a profinite space $X$ such that $G* X$ is closed, then $X/\sim_G$ is a profinite space.
\end{teo}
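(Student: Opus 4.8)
The plan is to show that $Q:=X/\!\sim_G$ is compact and that any two distinct points of $Q$ can be separated in the sense of the definition preceding the statement; by \cite[Proposition 2.3]{Magid} this gives that $Q$ is Hausdorff with a basis of clopen sets, hence profinite. Compactness is immediate, since by \eqref{qmap} the map $\pi_G$ is a continuous surjection and $X$ is compact. For the separation part I first record that orbits are closed: fixing $x\in X$, the map $g\mapsto (g,x)$ is continuous and $G*X$ is closed, so $G^x$ is closed in the compact group $G$, hence compact; as $\eta$ is continuous, the orbit $G^x\cdot x=\eta(G^x\times\{x\})$ is compact, and therefore closed because $X$ is Hausdorff.

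The heart of the argument is that the saturation of a clopen set is clopen. Let $W\subseteq X$ be clopen. By Lemma \ref{lem1.1.1}(i) its saturation $\pi_G^{-1}(\pi_G(W))$ equals $G^W\cdot W=\eta(G*W)$, which is $G$-invariant by Lemma \ref{lem1.1.1}(ii) and open by Lemma \ref{lem1.1}(i). For closedness I would use that $G*W=(G\times W)\cap(G*X)$ is a closed subset of the compact space $G\times W$ (here $W$ is compact and $G*X$ is closed), hence compact; its continuous image $\eta(G*W)=G^W\cdot W$ is then compact and therefore closed. Thus $G^W\cdot W$ is a $G$-invariant clopen set.

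To separate two distinct points $\pi_G(x)\ne\pi_G(y)$, consider the corresponding disjoint orbits $O_1=G^x\cdot x$ and $O_2=G^y\cdot y$, which are disjoint compact sets by the first paragraph. Since $X$ is profinite, disjoint compact sets can be separated by a clopen set, so there is a clopen $W$ with $O_1\subseteq W$ and $W\cap O_2=\emptyset$. Put $U:=G^W\cdot W$. The point to observe is that saturating does not engulf $O_2$: a point lies in $U$ precisely when its orbit meets $W$, and every point of $O_2$ has orbit $O_2$, which is disjoint from $W$; hence $U\cap O_2=\emptyset$, while $O_1\subseteq W\subseteq U$. Now $U$ and $X\setminus U$ are disjoint saturated clopen sets covering $X$, so $\pi_G(U)$ and $\pi_G(X\setminus U)$ are disjoint open sets of $Q$ covering $Q$, with $\pi_G(x)\in\pi_G(U)$ and $\pi_G(y)\in\pi_G(X\setminus U)$; this separates the two points.

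The main obstacle is establishing that the saturation $G^W\cdot W$ is \emph{closed}: this is exactly where the hypotheses that $G$ is compact and $G*X$ is closed enter, via the compactness of $G*W$. The second, easily-misread, ingredient is that saturating a clopen set disjoint from an orbit keeps it disjoint from that orbit, which is what lets the saturation genuinely separate the two orbits rather than merge them. Once both facts are in place, the separation of points of $Q$, and hence its profiniteness, follows from \cite[Proposition 2.3]{Magid}.
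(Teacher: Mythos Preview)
Your proof is correct and follows the same overall strategy as the paper's: show $X/\!\sim_G$ is compact, then separate any two distinct orbit-classes using \cite[Proposition~2.3]{Magid}, where the crucial step is that the saturation $G^W\cdot W=\eta(G*W)$ of a clopen $W$ is again clopen (open by Lemma~\ref{lem1.1}(i), closed because $G*W$ is compact). The paper proves this last point in exactly the same way.

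The one genuine difference is how the separating clopen is found. You first observe that each orbit $G^y\cdot y$ is closed (via $G^y$ closed in the compact group $G$ and $\eta$ continuous), then use that in a profinite space a point $x\notin G^y\cdot y$ can be separated from the compact set $G^y\cdot y$ by a clopen $W$; saturating $W$ still misses $G^y\cdot y$ since $G^W\cdot W$ is the union of orbits meeting $W$. The paper instead argues by contradiction with the finite intersection property: if every clopen neighbourhood $V$ of $x$ had $y\in G^V\cdot V$, the closed sets $\tilde F_y(V)=\eta^{-1}(y)\cap(G*V)$ would have the FIP in the compact space $G*X$, and a point in their intersection would force $x\sim_G y$. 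Your route is a bit more geometric and yields the intermediate fact that orbits are closed; the paper's route avoids that detour but trades it for a compactness/FIP argument. Both reach the same conclusion with the same essential use of the hypotheses that $G$ is compact and $G*X$ is closed.
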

\begin{proof}

	Note that $X/\!\sim_G$ is compact. Now we  show that different points $G^x\cdot x$ and $G^y\cdot y$ in $X\!/\sim_G$  can be separated.  Let  $\mathcal{C}:=\{ U\subseteq X : U$ is clopen$, \,x\in U \}$, then  $\mathcal{C}\neq \emptyset$. We claim that there exists $U\in \mathcal{C}$ such that $G^U\cdot U\cap G^y\cdot y=\emptyset$. Otherwise, for each $V\in \mathcal{C}$  the set $\tilde{F}_y(V)=\{ (g,v)\in G* V : \eta_g(v)=y\}$ is nonempty. Since $\tilde{F}_y(V)=\eta^{-1}(y)\cap (G*V)$,  then it is closed in $G*V$ and thus closed in $G\ast X$. Now, if $V_1, V_2\in \mathcal{C}$ then $\tilde{F}_y(V_1\cap V_2)\subseteq \tilde{F}_y(V_1)\cap \tilde{F}_y(V_2)$. In that sense $\lbrace \tilde{F}_y(V)\rbrace_{V\in \mathcal{C}}$ is a family in $G\ast X$ with the finite intersection property, thus there exists $(g,v)\in \bigcap_{V\in \mathcal{C}}\tilde{F}_y(V)$ this gives   $v=x$ and $\eta_g(x)=y$ which leads to a contradiction. Then there is $U\in \mathcal{C}$ such that $G^U\cdot U\cap G^y\cdot y=\emptyset$. Now we check that $G^U\cdot U$ is clopen. Indeed, it is open thanks to Lemma \ref{lem1.1};  moreover, given that $G\ast U$ is compact, $X$ is Hausdorff and $\eta(G*U)=G^U\cdot U$ then we have that $G^U\cdot U$ is closed. Finally consider $A=G^U\cdot U$ and $B=X\setminus A,$ by  Lemma \ref{lem1.1.1} the sets $A$ and $B$ are $G$-invariant and clopen, then 
$\pi_G^{-1}(\pi_G(A))=A$ and $\pi_G^{-1}(\pi_G(B))=B$ thus $G^x\cdot x$ and $G^y\cdot y$ are separated by the sets $\pi_G(A)$ and $\pi_G(B),$ respectively. 
\end{proof}

\begin{coro}\label{envpro}
	Let $\eta$ be a continuous  partial action of a profinite group $G$ on a profinite space $X.$ 
	If $G*X$ is closed, then  the enveloping spaces $X_G$ is profinite.
\end{coro}
\begin{proof}
Consider the continuous partial action $\hat\eta$ given in  Proposition \ref{hatcon} and  denote by $\sim_{\hat{G}}$ the orbit equivalence relation on $G\times X$ determined by $\hat{\eta}$, then  by \cite[Theorem 3.3]{PU1}  we get $X_G=(G\times X)/\!\sim_{\hat{G}}$. Thus, by Theorem \ref{pro3.3} it is enough to check that $G*(G\times X)$ is closed in $G \times G\times X.$
	In that sense, let  $(g,(h,x))\notin G*(G\times X)$ then  $(g,x)\notin G*X$ and   there are open sets  $T\subseteq G$ and $U\subseteq X$ such that $(g,x)\subseteq T\times U \subseteq (G*X)^c$. Note that $(g,(h,x))\in T\times (G\times U)\subseteq (G*(G\times X))^c$. This shows that $G*(G\times X)$ is closed, and  we conclude that $X_G$ is profinite.
\end{proof}

\begin{exe}  \cite[p. $22$]{Ruy} {\bf Partial Bernoulli action.} Let $G$ be a discrete group and $X:= \lbrace 0,1\rbrace^G$. There is a continuous global action $\beta=\lbrace \beta_g\rbrace$, where for all $\omega\in X$, $\beta_g(\omega)=g\omega$. The topological partial Bernoulli action $\eta$ is obtained by restricting  $\beta$ to the open set $\Omega_1=\lbrace \omega\in X : \omega(1)=1\rbrace$. Thus, by \eqref{induced} $D_g:=\Omega_1\cap \beta_g(\Omega_1)= \lbrace \omega\in X : \omega(1)=1=\omega(g)\rbrace$ and $\eta_g= \beta_g{\big|}_{D_g^{-1}}, g\in G$. Let us show that $G\ast \Omega_1$ is clopen. Let $\{(n_i, x_i)\}_{i\in I}$ be  a net in $G\ast \Omega_1$, convergent to $(n,x)$. Since $G$ is discrete then $(n_i)_{i\in I}$ is eventually constant so $n_i=n$ for large $i\in I$. On the other hand, as $x_i\longrightarrow x,$ then $1=x_i(1)\longrightarrow x(1)$ and from this $x(1)=1$. Similarly, it is obtained that $x(n^{-1})=1$ and from the above we conclude $(n,x)\in G\ast \Omega_1$. On the other hand, take $(n,x)\in G*\Omega_1$ then $x\in V=( \pi_{n^{-1}}{\big|}_{\Omega_1})^{-1}(\{1\})$ and $(n,x)\in \{n\}\times V\subseteq G*\Omega_1$. This shows that $G*\Omega_1$ is clopen. Thus, if $G$ is finite, then Theorem \ref{pro3.3} implies that $X/\!\sim_G$ is a profinite space. 

\end{exe}

\subsection{On continuous sections of  the quotient map $\pi_G.$}
In this section, we are interested in providing conditions under which the quotient map $\pi_G$ defined in \eqref{qmap} has a continuous section. For this, we start with the next.

\begin{lem}\label{ret}
	Let $\mu:G\times Y\ni  (g,y)\mapsto g\cdot y\in  Y$ be a continuous action of a topological group $G$ on a profinite space $Y.$ Suppose $X$ is a clopen subset of $Y,$ such that $G\cdot X = Y.$ Then there exists a retraction $r: Y\rightarrow X$ such that: 
\begin{enumerate}
\item [(i)]  The map  \begin{equation}\label{rre}\overline{r}: Y/\!\sim_G\rightarrow X/\!\sim_G,\,\,\, G\cdot y\mapsto G^{r(y)}\cdot r(y),\end{equation} is continuous, where, for $x\in X$ the set $G^x\cdot x$ is the orbit  of $x$ given by the induced partial action of $\mu$ on $X$ (see equation \eqref{indug}).
\item [(ii)] If $\overline{i}: X/\!\sim_G\rightarrow Y/\!\sim_G$, $G^x\cdot x\rightarrow G\cdot x$,  then  $\overline{r}\circ\overline{i}={\rm id}_{X/\sim_G}$.
\end{enumerate}
\end{lem}
\begin{proof}
 By assumption $Y=\bigcup\limits_{g\in G}g\cdot X.$ Since $Y$ is compact there are $g_1=1, g_2,\cdots, g_n\in G$ such that $Y=\bigcup\limits_{i=1}^ng_i\cdot X$. For  $1\leq j\leq n$  set $X_j=g_j\cdot X\setminus \bigcup\limits_{i=1}^{j-1}g_i\cdot X$, thus the family $\{X_j\}_{j=1}^n$  is a partition of $Y$ such that $X_1=X$ and $g_{j}^{-1}\cdot X_j\subseteq X$.  Further, the continuity of the action implies that $r_j: X_j\ni x\mapsto g_j^{-1}\cdot x\in X$ is continuous and thus $r=\bigcup\limits_{i=1}^nr_j: Y\rightarrow X$  is a retraction of $Y$ in $X$.  

\noindent (i) We first check that  $\overline{r}$ is well defined. Take $x,y\in Y$ such that  $G\cdot x=G\cdot y$, then there exist $g\in G$ and $1\leq i,j\leq n$ for which  $x=g\cdot y, x\in X_i$ and $y\in X_j$.  Then $r(y)=g_j^{-1}\cdot y$ and  
$$r(x)=g_i^{-1}\cdot x=(g_i^{-1}g)\cdot y=(g_i^{-1}g g_j)\cdot  r(y);$$ but  $r(x)\in( g_i^{-1}gg_j^{-1}\cdot X)\cap X$,  that is $g_i^{-1}gg_j^{-1}\in G^{r(x)},$ where $G^{r(x)}$ is given by \eqref{indug}. Note that $(g_jg^{-1}g_i)\cdot r(x)=r(y)$, then $G^{r(x)}\cdot r(x)=G^{r(y)}\cdot r(y)$ and  $\overline{r}$  is well defined.  To show that  $\overline{r}$ is continuous, consider the quotient map $\Pi_G:Y\rightarrow Y/\!\sim_G$ induced by $\mu.$  Since $\overline{r}\circ \Pi_G=\pi_G\circ r$  we get that $\overline{r}$ is continuous. 

\noindent (ii) It is clear that $\bar i$ is well-defined, moreover the assumption follows from the fact that $r$ is a retraction.
\end{proof}

\begin{defi}
	Let $\eta: G\ast X\longrightarrow X$ be a topological  partial action of a group $G$ on a set $X$. We say that $\eta$ is free if, for each $(g,x)\in G\ast X$ such that $\eta(g,x)=x$, we have $g=1$.
\end{defi}

\begin{teo}\label{contgeral}{\rm
	Let $\eta$ be a continuous and free partial action of a profinite group $G$ on a profinite space $X$ such that $G*X$ is clopen. Then $\pi_G : X \rightarrow X/\sim_G$ has a continuous section.}
\end{teo}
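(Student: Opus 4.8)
The plan is to adapt the Zorn's lemma argument from \cite[Section 2.4]{Magid} by descending through the closed normal subgroups of $G$, using Lemma \ref{lem4.3} to reduce each elementary step to a \emph{finite} quotient group and then applying Proposition \ref{pro3.6}. The key observation is that $X/\!\sim_{\{1\}}=X$ and $\pi_{\{1\},G}=\pi_G$, so it suffices to build a continuous section of $\pi_{N,G}$ for $N=\{1\}$. To this end I would consider the poset $\mathcal P$ of pairs $(N,s)$ with $N\trianglelefteq_{\rm cl}G$ and $s\colon X/\!\sim_G\to X/\!\sim_N$ a continuous section of $\pi_{N,G}$, ordered by declaring $(N_1,s_1)\preceq(N_2,s_2)$ iff $N_2\subseteq N_1$ and $\pi_{N_2,N_1}\circ s_2=s_1$. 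This poset is nonempty since $(G,\mathrm{id})\in\mathcal P$, and each $X/\!\sim_N$ is profinite by Theorem \ref{pro3.3}, because $N$ is compact and $N*X=(N\times X)\cap(G*X)$ is closed.

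First I would check that every chain $\{(N_\alpha,s_\alpha)\}$ has an upper bound. Put $N=\bigcap_\alpha N_\alpha\trianglelefteq_{\rm cl}G$; the spaces $X/\!\sim_{N_\alpha}$ form an inverse system over the directed set of reverse inclusion, with bonding maps $\pi_{N_\beta,N_\alpha}$, and I would show that the natural map $X/\!\sim_N\to\varprojlim X/\!\sim_{N_\alpha}$ is a homeomorphism. For injectivity, if $x\sim_{N_\alpha}y$ for every $\alpha$ then the sets $F_\alpha=\{g\in N_\alpha:(g,x)\in G*X,\ \eta_g(x)=y\}$ are nonempty and closed (here $G*X$ closed and $X$ Hausdorff force $\eta^{-1}(y)$ closed), and along the chain they have the finite intersection property, so $\varnothing\neq\bigcap_\alpha F_\alpha\subseteq N$ gives $x\sim_N y$. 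For surjectivity, the compact image of $X/\!\sim_N$ surjects onto each $X/\!\sim_{N_\alpha}$, hence is dense by Lemma \ref{lem4.5} and therefore all of the limit. The compatible maps $s_\alpha$ then assemble into a continuous section $s$ of $\pi_{N,G}$, so $(N,s)\in\mathcal P$ is an upper bound and Zorn's lemma yields a maximal element $(N_0,s_0)$.

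Next I would argue $N_0=\{1\}$. Suppose not; by \cite[Lemma 2.12]{Magid} there is $M\trianglelefteq_{\rm cl}G$ with $M\subsetneq N_0$ and $M$ open in $N_0$, so $N_0/M$ is finite. Applying Lemma \ref{lem4.3} with the subgroup $M$ gives the continuous free partial action $\eta_{G/M}$ of $G/M$ on $X/\!\sim_M$, which I restrict to the finite subgroup $N_0/M$. Unwinding \eqref{etaq} exactly as in Remark \ref{comen4.4}, one finds that $\eta_{gM}(\pi_M(x))=\pi_M(y)$ holds iff $\eta_k(x)=y$ for some $k\in Mg$; consequently $\pi_M(x)\sim_{N_0/M}\pi_M(y)$ iff $x\sim_{N_0}y$, which produces a homeomorphism $(X/\!\sim_M)/\!\sim_{N_0/M}\cong X/\!\sim_{N_0}$ under which the orbit projection is identified with $\pi_{M,N_0}$.

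To invoke Proposition \ref{pro3.6} for the finite group $N_0/M$ acting on the profinite space $X/\!\sim_M$, the remaining point is that the domain $(N_0/M)*(X/\!\sim_M)$ is clopen. By the previous computation, $(gM,\pi_M(x))$ lies in this domain iff $Mg\cap G^x\neq\varnothing$. Consider the continuous, open, surjective (hence quotient) map $q\colon N_0\times X\to(N_0/M)\times(X/\!\sim_M)$, $q(g,x)=(gM,\pi_M(x))$; the set $P=\{(m,g,x)\in M\times N_0\times X:(mg,x)\in G*X\}$ is clopen as the preimage of the clopen set $G*X$ under a continuous map, and its projection to $N_0\times X$ equals $q^{-1}\big((N_0/M)*(X/\!\sim_M)\big)$, which is open and, since $M$ is compact, also closed; as $q$ is a quotient map the domain is clopen. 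Proposition \ref{pro3.6} then supplies a continuous section $\sigma$ of $\pi_{M,N_0}$, and $\sigma\circ s_0$ is a continuous section of $\pi_{M,G}=\pi_{N_0,G}\circ\pi_{M,N_0}$ satisfying $\pi_{M,N_0}\circ(\sigma\circ s_0)=s_0$, so $(M,\sigma\circ s_0)\succ(N_0,s_0)$, contradicting maximality; hence $N_0=\{1\}$ and $s_0$ is the desired section of $\pi_G$. The main obstacle I anticipate is precisely this last paragraph: correctly translating the partial action of $G/M$ back to representatives in $G$ to identify the finite orbit space with $X/\!\sim_{N_0}$, and exploiting compactness of $M$ to propagate the clopenness of $G*X$ down to the finite quotient.
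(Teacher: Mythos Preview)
Your proposal is correct and follows essentially the same Zorn's lemma descent as the paper: the poset of pairs $(N,s)$, the inverse-limit argument for chains, and the elementary step via Lemma \ref{lem4.3} and Proposition \ref{pro3.6} are all the same. The only notable differences are that the paper applies Lemma \ref{lem4.3} directly to the pair $(N,M)$ (obtaining the $N/M$-action and the homeomorphism $\psi$ at once) rather than passing through $G/M$ and restricting to $N_0/M$, and that you explicitly verify the clopen hypothesis on $(N_0/M)*(X/\!\sim_M)$ needed for Proposition \ref{pro3.6}, a point the paper's proof uses without checking.
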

\begin{proof} Let $\mu$ be the enveloping action  of $\eta$ given by \eqref{action}. Since $\eta$ is free, then so is $\mu,$ and follows by  \cite[Proposition 2.9]{Magid}  that the corresponding quotient map $\Pi_G$ has a continuous section $q:X_G/\!\sim_G\rightarrow X_G.$ Let  $$\overline{\iota}: X/\!\sim_G\to  X_G/\!\sim_G,\,\,\, G^x\cdot x\mapsto  G\iota(x),$$ where $\iota$ is the map defined in \eqref{iota}. Note that  $\overline{\iota}\circ\pi_G=\Pi_G\circ\iota$ then $\overline{\iota}$ is continuous. Moreover, $G\iota(X)=X_G$ and $\iota(X)$ is clopen in $X_G,$ indeed it is open because $\iota$ is open, and it is closed because $\iota(X)$ is compact in the  profinite (in particular Hausdorff) space $X_G;$ it follows from Lemma \ref{ret} that  there is a retraction  $r: X_G\rightarrow X$ such that the following diagram is commutative
\begin{center}
	 $\xymatrix{ X \ar[d]^-{\pi_G} \ar[r]_-{\iota}& X_G\ar[d]_-{\Pi_G} \ar@/_{5mm}/[l]_-{r}  \\
		X/\sim_G\ar[r]_-{\overline{\iota}}\ar@/^{7mm}/[u]^-{r\circ q\circ\overline{i}}& X_G/\sim_G\ar@/^{5mm}/[l]^-{\overline{r}}\ar@/_{7mm}/[u]_-{q,}
	}$   
\end{center}

\noindent in particular, $\pi_G\circ r=\overline{r}\circ\Pi_G$. Further, by the same Lemma $\overline{r}\circ \overline{\iota}={\rm id}_{X/\sim_G}$, then
$$\pi_G\circ (r\circ q \circ \overline{\iota} )=\overline{r}\circ(\Pi_G\circ q)\circ \overline{\iota}=\overline{r}\circ \overline{\iota}={\rm id}_{X/\sim_G},$$ and the map $r\circ q \circ \overline{\iota}$ is continuous, thus  $\pi_G$ has a continuous section.
\end{proof}

\begin{rem} Notice that in general the assumption that $\eta$ acts freely on $X,$ cannot be omitted even when $\eta$ acts globally, see for instance \cite[Example 5.6.8]{RZ}.
\end{rem}

\begin{ejem}
Let $G$ be a profinite group  and  $\mu$ a free continuous action of   $G$ on a profinite space $X.$ Take  $Y$  a clopen subset of  $X,$ then  $\mu$ induces a free partial action $\eta:G\ast Y\rightarrow Y.$  It is not difficult to show that $G\ast Y$ is closed in $G\times Y.$  Moreover $G\ast Y$ is open in $G\times Y$ thanks to \cite[ii) Theorem 3.13]{KL}. 
Then follows by Theorem \ref{contgeral} that the quotient map  $\pi_G$ has a continuous section.

\end{ejem}

\subsection{Relations between continuous sections of $\pi_G$ and $\Pi_G.$}\label{apglob}
Let $\mu$ be the globalization of $\eta.$ We study relations between continuous sections of the maps $\pi_G$  and $\Pi_G,$ being   $\Pi_G$ the corresponding quotient map of  the enveloping action $\mu.$

\begin{pro}\label{cor3.3}{\rm
	Let $\eta$ be a continuous  partial action of a topological group $G$ on a  space $X.$ Then the following statements hold.
\begin{enumerate} 
\item [(i)] If $\pi_G$ has a continuous section, so does $\Pi_G$. 
\item [(ii)]  If  $G*X$ is open and $q$ is a continuous section of $\Pi_G$ such that ${\rm im}\, q\subseteq \iota(X),$ then $\pi_G$ has a continuous section.
\item [(iii)] If $\Pi_G$ and $\hat{\Pi}_G$ have continuous sections, then $\pi_G$ has a continuous section, where  $\hat{\Pi}_G$ is the quotient map $G\times X\to X_G$ induced by the partial action  $\hat \eta$ of Proposition \ref{hatcon}.
\end{enumerate}
}
\end{pro}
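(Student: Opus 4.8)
The plan rests on two structural observations that reduce all five assertions to results already in hand. First, I would compare the relation $R$ of \eqref{eqgl} with the partial action $\hat\eta$ of Example \ref{etahat}: a direct check shows that $[a,x]=[b,y]$ is precisely the statement that $(a,x)$ and $(b,y)$ lie in the same orbit of $\hat\eta$ on $G\times X$. Consequently $X_G=(G\times X)/\!\sim_{\hat G}$, and the quotient map $\hat{\Pi}_G\colon G\times X\to X_G$, $(g,x)\mapsto[g,x]$, is exactly the orbit map of $\hat\eta$, hence continuous and open by \cite[Lemma 3.2]{PU1}. Second, the injection $\iota$ of \eqref{iota} induces a map $\bar\iota\colon X/\!\sim_G\to X_G/\!\sim_G$ satisfying $\Pi_G\circ\iota=\bar\iota\circ\pi_G$, and the core technical claim is that $\bar\iota$ is a homeomorphism.

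For item 1) I would apply Theorem \ref{pro3.3} to $\hat\eta$: here $G\times X$ is profinite and $G$ is compact; $\hat\eta$ is continuous by Proposition \ref{hatcon}; and the domain $G*(G\times X)$ is the preimage of the closed set $G*X$ under the continuous projection $G\times G\times X\to G\times X$, hence closed. The theorem then gives that $X_G=(G\times X)/\!\sim_{\hat G}$ is profinite. For item 2), clopenness of $G*X$ makes it closed, so $X_G$ is profinite by item 1); the enveloping action $\mu$ is a continuous global action of the profinite group $G$ on the profinite space $X_G$, which I view as a partial action whose domain $G\times X_G$ is clopen. If $\mu$ is free, Theorem \ref{contgeral}, applied with $(G,X,\eta)$ replaced by $(G,X_G,\mu)$, furnishes a continuous section of $\Pi_G$.

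To establish that $\bar\iota$ is a homeomorphism I would argue as follows. Surjectivity follows from $G\cdot\iota(X)=X_G$, and injectivity from the fact, read off \eqref{eqgl}, that $[1,x]$ and $[1,y]$ share a $\mu$-orbit iff $x\sim_G y$; continuity is immediate from $\bar\iota\circ\pi_G=\Pi_G\circ\iota$. For openness, given $U\subseteq X/\!\sim_G$ open, I would compute $\Pi_G^{-1}(\bar\iota(U))=\hat{\Pi}_G\big(G\times\pi_G^{-1}(U)\big)$, which is open because $\hat{\Pi}_G$ is open, so $\bar\iota(U)$ is open since $\Pi_G$ is a quotient map; note this needs no clopenness hypothesis. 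Granting this, item 3) follows by setting $S:=\iota\circ s\circ\bar\iota^{-1}$ for a section $s$ of $\pi_G$, since $\Pi_G\circ S=\bar\iota\circ\pi_G\circ s\circ\bar\iota^{-1}=\mathrm{id}$. For item 4), openness of $G*X$ makes $\iota$ an open embedding, so $\iota^{-1}\colon\iota(X)\to X$ is continuous and $s:=\iota^{-1}\circ q\circ\bar\iota$ is a continuous section of $\pi_G$ (injectivity of $\bar\iota$ gives $\pi_G\circ s=\mathrm{id}$). For item 5), writing $\mathrm{pr}_X\colon G\times X\to X$ for the projection and $r$ for a section of $\hat{\Pi}_G$, the map $s:=\mathrm{pr}_X\circ r\circ q\circ\bar\iota$ is continuous, and since $r(q(\bar\iota(\pi_G(x))))$ represents a point of the $\mu$-orbit of $[1,x]$, relation \eqref{eqgl} forces $\pi_G(s(o))=o$ for every $o\in X/\!\sim_G$.

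The hard part will be the homeomorphism claim for $\bar\iota$, specifically its openness under the bare hypotheses (and, in items 3) and 5), with no assumption on $G*X$ at all); routing this through the openness of the orbit map $\hat{\Pi}_G$ of $\hat\eta$, rather than through $\iota$ directly, is what makes it go through. The conceptual linchpin throughout is the identification $X_G=(G\times X)/\!\sim_{\hat G}$, which lets Theorems \ref{pro3.3} and \ref{contgeral} carry the analytic weight while the remaining items reduce to formal manipulations of the commuting square relating $\pi_G$, $\Pi_G$, $\iota$ and $\bar\iota$.
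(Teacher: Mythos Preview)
Your proposal is correct and follows essentially the same route as the paper: for item 1) both reduce to Theorem \ref{pro3.3} applied to $\hat\eta$ via the identification $X_G=(G\times X)/\!\sim_{\hat G}$ (the paper cites \cite[Theorem 3.3]{PU1} for this, you verify it directly), and for item 2) both apply Theorem \ref{contgeral} to the global action $\mu$ on $X_G$. For items 3)--5) the sections you write down are literally the same maps the paper defines; the only difference is packaging. The paper constructs each section ad hoc and checks well-definedness, continuity and the section property by element chasing, whereas you first isolate the map $\bar\iota\colon X/\!\sim_G\to X_G/\!\sim_G$, prove once that it is a homeomorphism, and then obtain each section as a formal composite. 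Your openness argument for $\bar\iota$ via the identity $\Pi_G^{-1}(\bar\iota(U))=\hat\Pi_G(G\times\pi_G^{-1}(U))$ and the openness of the orbit map $\hat\Pi_G$ is a clean point the paper leaves implicit (in the paper's item 3), the continuity of $s$ tacitly uses continuity of $\bar\iota^{-1}$, which is exactly your openness claim). Your preimage argument for closedness of $G*(G\times X)$ in item 1) is also a slight streamlining of the paper's neighbourhood computation. So: same proof, but your organization through $\bar\iota$ is tidier and makes the logical dependencies more transparent.
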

\begin{proof}

(i) Suppose that $q:X/\!\sim_G\rightarrow X$ is a continuous section of $\pi_G$. Consider 
$$s:\left(X_G/\sim_G\right)\ni G\cdot [1,x]\mapsto \iota(q(\pi_G(x))) \in X_G, $$ where $\iota$ is given by \eqref{iota}.  We claim that $s$ is a continuous section of $\Pi_G$. First, note that $s$ is well defined. In fact, let $x,y\in X$ such that $[1,x]\sim_G [1,y]$, we have $\pi_G(x)=\pi_G(y)$. Set $z_x=q(\pi_G(x))$ and $z_y=q(\pi_G(y))$, then $\pi_G(z_x)=\pi_G(x)=\pi_G(y)=\pi_G(z_y)$. Thus \begin{center}
		$s(G\cdot [1,x])=[1,z_x]=[1,z_y]=s(G\cdot [1,y])$,
	\end{center}
and $s$ is well-defined. Since $q$ and $\iota$ are continuous, we get that $s$ is continuous. To finish the proof, take $x\in X$ and let $y_x\in X$ such that $q(\pi_G(x))=y_x$.  Since $\pi_G(y_x)=\pi_G(x)$ there is $g\in G^x$ such that $\eta_g(x)=y_x$. Thus, $\mu(g,[1,x])=[1,y_x]$ and we have 
$$(\Pi_G\circ s)(G\cdot [1,x])=\Pi_G([1,y_x])=G\cdot [1,y_x]=G\cdot [1,x].$$ This shows that $s$ is a continuous section of $\Pi_G$.

(ii)	Let  $r:\left(X/\!\sim_G\right)\ni G^x\cdot x\mapsto  \iota^{-1}(q(G[1,x]))\in X.$  It is not difficult to check that $r$ is well-defined, moreover  that $G*X$ is open implies that $\iota$ is open and thus $r$ is continuous.  Finally,  take  $x,z_x\in X$ such that  $q(G[1,x])=[1,z_x],$ then  $G[1,x]=G[1,z_x]$ which gives $\pi_G(x)=\pi_G(z_x),$ this implies $\pi_G(r(G^x\cdot x))=\pi_G(z_x)=\pi_G(x)$ and  $r$ is a continuous  section of $\pi_G$.

(iii) Let  $q:\left(X_G/\!\sim_G\right)\to X_G$ and $t:X_G\rightarrow G\times X$ be  continuous sections of $\Pi_G$ and $\hat{\Pi}_G,$ respectively. Define 
\begin{center}
	$p:X/\!\sim_G\ni G^x\cdot x\mapsto {\rm proj}_2(t(q(G\cdot [1,x]))) \in X.$
\end{center}
We check that $p$ is a continuous section of $\pi_G$. The map  $p$ is well-defined and continuous, since $X/\sim_G\ni G^x\cdot x\mapsto G[1,x]\in X_G/\!\sim_G$ is continuous. On the other hand, take $x\in X$. Let $h,k\in G$ and $y,z\in X$ such that $q(G[1,x])=[h,y]$ and $t([h,y])=(k,z)$, then $G[1,x]=G[h,y]$ and $[k,z]=[h,y]$. Thus, $\eta_{h^{-1}g}(x)=y$ and $\eta_{k^{-1}h}(y)=z$ for some $g\in G$, thus $G^x\cdot x=G^z\cdot z,$ and we have that $\pi_G(p(G^x\cdot x))=G^z\cdot z=G^x\cdot x$, as desired.
\end{proof}

 It follows by \cite[Lemma 4.2]{PU2} that  $\Pi_G$ has a Borel section, provided that $G$ and $X$ are also a Polish group and a Polish space, respectively. Thus, by item (i) of Proposition \ref{cor3.3} and the proof  (iii) of  the same proposition,  we have  the next.
\begin{pro}\label{borel} {\rm Let $\eta$ be a continuous partial action of a profinite Polish group $G$ on a profinite Polish space  $X$ such that $G*X$ is closed, then $\pi_G$ has a Borel section.}
\end{pro}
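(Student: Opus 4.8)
The plan is to run the construction from the proof of part 5) of Proposition \ref{cor3.3} in the Borel category instead of the continuous one; thus the whole argument reduces to producing Borel sections of the two maps $\Pi_G$ and $\hat{\Pi}_G$ feeding that construction, since the verification that the resulting map is a section is purely set-theoretic and does not see the difference between continuous and Borel maps. First I would check that the relevant spaces are standard Borel. By part 1) of Proposition \ref{cor3.3}, and because $G*X$ is closed, the enveloping space $X_G$ is profinite; being a continuous Hausdorff image of the compact metrizable space $G\times X$ it is itself compact metrizable, hence Polish, and so are $X_G/\!\sim_G$ and $G\times X$. Hence ``Borel section'' is meaningful for each of the maps involved.

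Next I would secure the two sections. For $\hat{\Pi}_G\colon G\times X\to X_G$ a Borel section $t$ is provided directly by \cite[Lemma 4.2]{PU2}, using that $G$ is a Polish group and $X$ a Polish space. For $\Pi_G\colon X_G\to X_G/\!\sim_G$ I would argue as follows: $\Pi_G$ is the orbit map of the enveloping action $\mu$, a continuous action of the \emph{compact} group $G$ on the \emph{compact} space $X_G$. Consequently $\Pi_G$ is a continuous open surjection whose fibres $\Pi_G^{-1}(w)=G\cdot y$ are compact, and the multifunction $w\mapsto \Pi_G^{-1}(w)$ is weakly measurable because $\{w:\Pi_G^{-1}(w)\cap U\neq\emptyset\}=\Pi_G(U)$ is open for every open $U\subseteq X_G$. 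By the Kuratowski--Ryll-Nardzewski selection theorem this multifunction admits a Borel selector $q\colon X_G/\!\sim_G\to X_G$, which is precisely a Borel section of $\Pi_G$.

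With $q$ and $t$ at hand I would mimic the proof of part 5) verbatim, defining $p\colon X/\!\sim_G\to X$ by $p(G^x\cdot x)={\rm proj}_2\big(t(q(G\cdot[1,x]))\big)$. The assignment $G^x\cdot x\mapsto G\cdot[1,x]$ into $X_G/\!\sim_G$ is continuous, as recorded in the proof of part 5), while $q$ and $t$ are Borel and ${\rm proj}_2$ is continuous; since a composition of Borel maps with continuous ones is again Borel, $p$ is Borel. The identity $\pi_G\circ p={\rm id}$ then holds by exactly the computation carried out in the proof of part 5), giving the desired Borel section.

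The only genuinely nontrivial ingredient is the Borel section $q$ of $\Pi_G$. In the continuous setting one obtains a section of $\Pi_G$ from freeness of $\mu$ (part 2) of Proposition \ref{cor3.3}), but here $\eta$, and hence $\mu$, need not be free, so no continuous section need exist and one must instead appeal to a measurable selection theorem for the orbit map of the compact group action. The remaining steps---the reduction to Polish spaces and the transfer of the algebraic verification of part 5)---are routine.
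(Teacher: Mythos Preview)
Your proposal is correct and follows exactly the blueprint the paper sketches: invoke \cite[Lemma 4.2]{PU2} for a Borel section of $\hat{\Pi}_G$, use part 1) of Proposition \ref{cor3.3} to know $X_G$ is profinite (hence Polish), and then rerun the set-theoretic construction from the proof of part 5) in the Borel category. The only point where you go beyond the paper is that you explicitly supply the second ingredient the construction of part 5) needs, namely a Borel section of $\Pi_G$, via the Kuratowski--Ryll-Nardzewski selection theorem; the paper's one-line justification does not name this step. Your argument for that section (compact fibres, weak measurability from openness of $\Pi_G$) is sound, so your write-up is in fact a more complete version of the same proof.
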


\section{Partial actions and reflective categories}\label{actions}

Let $X$ and $Y$ be sets  equipped with partial actions by a group $G.$ A function $f:X\to Y$ is called a  \textit{G-map} if for each $g\in G$ and $x\in X$ such that $\exists g\cdot x$ then $\exists g\cdot f(x)$  and $f(g\cdot x)=g\cdot f(x).$ Let  $G$ be a topological group and  $\mathscr{PA}_G$ the category of continuous partial actions of $G$ and the $G$-maps as morphisms,  also denote by $\mathscr{A}_G$  the  subcategory  of $\mathscr{PA}_G$ whose objects are continuous actions of $G.$  

Recall the definition of a reflective subcategory.
\begin{defi} Let $\mathscr{C}$ be a category, a subcategory  $\mathscr{D}$ of  $\mathscr{C}$ is reflective, if the inclusion functor $\mathscr{D}\to \mathscr{C}$ has a left adjoint. 
\end{defi}

We have  the next.
\begin{pro}\label{care}{\rm \cite[Theorem 2 (i), P 83]{MC} A subcategory $\mathscr{D}$ of a category  $\mathscr{C}$ is  reflective, if and only if for any object
$C \in  \mathscr{C}$ there exists an object $R_D(C) \in   \mathscr{D}$  and a morphism $\epsilon_D(C):  C\to R_D(C)$
 such that for all $ D \in    \mathscr{D}$  and $\varphi : C \to D$
there is a unique   $\psi: R_D(C) \to D$ in   $\mathscr{D} $ with   $\psi\circ  \epsilon_D(C)=\varphi.$}
\end{pro}

 For the convenience of the reader we represent a partial action $\eta$ of a group $G$ on a set $X$ with the symbol $\eta^X.$ The following fact follows from  \cite[Theorem 1.1]{AB} and Proposition \ref{care}.

\begin{pro}\label{abar}{\rm  The category $\mathscr{A}_G$  is reflective in $\mathscr{PA}_G$. More precisely, the enveloping functor $E:\mathscr{PA}_G\to  \mathscr{A}_G$  sending an object $\eta^X\in Ob(\mathscr{PA}_G)$ to  $\mu^{X_G}$ defined  by \eqref{action},  and a morphism $f: \eta^X\to \theta^Y$ to $E(f): X_G\ni [g,x]\mapsto [g, f(x)]\in Y_G,$ is left adjoint to the inclusion functor.}
\end{pro}

As a consequence of the results above we obtain the next.

\begin{coro} Let $G$ be a profinite group, $\{\eta_i^{X_i}\}_{i\in I}$ be a family of objects in $\mathscr{PA}_G$ 
 such  that $\varprojlim \eta_i^{X_i}=\eta^X$ in $\mathscr{PA}_G,$ then  $\varprojlim \mu_i^{({X_i})_G}$ exists in $ \mathscr{A}_G$ and $\varprojlim \mu_i^{({X_i})_G}=\mu^{X_G},$  where $\mu^{X_G}$ is the enveloping action of $\eta^X$.  In particular, if  every $X_i$ is finite and discrete,  then $X_G$ is  profinite provide that  ${\rm dom}\, \eta$  is closed  or ${\rm dom}\, \eta_i$ are closed, for any $i\in I.$  
\end{coro}
\begin{proof} The first assertion follows from Proposition \ref{abar} and the fact that  left adjoint functors preserve inverse limits. For the last assertion, suppose first that ${\rm dom}\, \eta$  is closed. Now  $X$ is profinite because  $\varprojlim X_i=X,$   and the result follows from Corollary \ref{envpro}. Finally, if ${\rm dom}\, \eta_i$ are closed for any $i\in I$  we have by  Corollary \ref{envpro} that $(X_i)_{G}$ is profinite for any $i\in I,$ thus $X_G$ is profinite because $\varprojlim {(X_i)}_G=X_G$ is an inverse limit of profinite spaces.
\end{proof}

\subsection{From actions on $X$ to actions on $ Comp(X)$}
 We denote by  $\mathscr{SAK}_G$ the category of  separately continuous actions of a topological group $G$ on compact Hausdorff spaces  and morphisms are continuous $G$-maps. Also, let 
$\mathscr{SAP}_G$  be the subcategory of $\mathscr{SAK}_G$ whose objects are  separately continuous actions of $G$ on profinite spaces. 
 In this section, inspired by  \cite{AT}, we  prove that 
 $\mathscr{SAP}_G$ is reflective in $\mathscr{SAK}_G.$

 For the reader's convenience we give the next.
\begin{defi} Let $R$ be a commutative ring with unit. An ideal $I$ of $R$ is called  \textit{regular} provided that it is generated by idempotents. 
We say that $I$ is \textit{max-regular} if it is maximal in the set of proper regular ideals of $R,$ ordered by inclusion.  We denote   by  $\mathfrak{mr}(R)$ the set of max-regular ideals of $R.$
\end{defi}

Let $X$ be a topological space and $R=C(X)$ the ring of continuous real valued functions of $X.$ It is clear that the idempotents in $R$ are characteristic functions $\chi_U$ on clopen subsets $U$ of $X.$ The collection of clopen subsets of $X$ will be denoted from now on by $Clop(X).$  Further,  let $H(X)$ be the group  of homeomorphisms of $X,$ 
$M$ an ideal  of $R$ and $\alpha\in H(X).$  We set $M\alpha=\{f\circ \alpha: f\in M\}.$ We proceed with the next.

\begin{lem}\label{regular} 
	Let  $X$ be a topological space, $\alpha\in H(X)$ and $M$ be an ideal of $R=C(X);$ then the following statements hold. \begin{enumerate}
		\item [(i)]  If $M$ is regular, then  $M\alpha$   is regular,
		\item [(ii)] If $M\in\mathfrak{mr}(R)$, then $M\alpha\in \mathfrak{mr}(R)$.
	\end{enumerate}
\end{lem}

\begin{proof}
	(i) It is clear that  $M\alpha$ is an additive subgroup of $C(X).$ 
 To prove that it is an ideal  take  $h\in R$ and  $f\in M.$ Denoting the product in $R$ by concatenation   we have for $x\in X$ that 
\begin{center}
		$[h(f\circ \alpha)](x)=h(x)(f\circ \alpha)(x)=h(\alpha^{-1}(\alpha(x)))f(\alpha(x))=[(h\circ \alpha^{-1})f]\circ \alpha(x),$
	\end{center}
then $h(f\circ\alpha)=[(h\circ\alpha^{-1})f]\circ\alpha\in M\alpha$, which shows that  $M\alpha$ is an ideal. Now we prove that  $M\alpha$ is regular. 
Since $M$ is regular there is a family of idempotents $\{\chi_U\}_{U\in\mathfrak{U} }$  that generates $M,$ for some $\mathfrak{U}\subseteq Clop(X).$ 
It is not difficult to see that $ \chi_U\circ \alpha=\chi_{ \alpha^{-1}(U)},$ for any $U\in\mathfrak{U}.$   
 We shall show that $\{\chi_{\alpha^{-1}(U)}\}_{U\in\mathfrak{U} }$ is a family of idempotents that generates  $M\alpha$. For this take  $f\in M$,  $U_1,\cdots,U_n\in\mathfrak{U} $ and  $f_1,\cdots,f_n\in R$ such that  $f=f_1\chi_{U_1}+\dots+f_n\chi_{U_n}$.  Then  for $x\in X$ we have 
\begin{align*}
(f\circ \alpha)(x)&=f_1(\alpha(x))\chi_{U_1}( \alpha(x))+\cdots+f_n(\alpha(x)) \chi_{U_n}(\alpha(x)),\\
&=f_1(\alpha(x))(\chi_{U_1}\circ \alpha)(x)+\cdots+f_n(\alpha(x))(\chi_{U_n}\circ \alpha)(x),\\
&=f_1(\alpha(x))\chi_{ \alpha^{-1}(U_1)}(x)+\cdots+f_n(\alpha(x)) \chi_{ \alpha^{-1}(U_n)}(x),
\end{align*}
	then $f\circ\alpha$ belongs to the ideal generated by $\{\chi_{\alpha^{-1}(U)}\}_{U\in\mathfrak{U} },$ thus $M\alpha$ is regular, as desired.
	
	(ii) By part (i)  the ideal $M\alpha$  is regular. We check that it is max-regular, for this  let $N$ be a regular ideal of $C(X)$ containing $M\alpha$ then $M=(M\alpha)\alpha^{-1}\subseteq N\alpha^{-1},$ since  $N\alpha^{-1}$ is regular we have     $M=N\alpha^{-1}$ then  $M\alpha=N$ which shows (ii). 
	
\end{proof}

Let $X$ be a topological space and $Comp(X)$ be  the collection of connected components of $X.$ According to  \cite{AT} the connected components of $X$ can be described using the ring $C(X).$  Let $M\in\mathfrak{mr}(C(X))$ then
\begin{equation}\label{cxm}C^X_M=\{x\in X: M\subseteq \mathfrak{m}_x\},
\end{equation} 
is a connected component of $X,$ where, for each $x\in X, \mathfrak{m}_x=\{f\in C(X)\mid f(x)=0\}$ is a maximal ideal of $C(X).$ We endow  $Comp(X)$ with the topology $\mathfrak{T}$ which has $\{\mathscr{O}_U\}_{U\in Clop(X)}$ as basis, being  $\mathscr{O}_U=\{C^X_M: \chi_U\notin M\},$ for any  $U\in Clop(X).$  Additionally, the topology $\mathfrak{T}$ makes the projection $\psi_X:X\rightarrow Comp(X)$ continuous.
 
 


 We recall the next.
\begin{pro}\label{T}{\rm  Let   $X$ be a   compact Hausdorff space. Then the following assertions hold.
\begin{itemize} 
\item [(i)] $(Comp(X),\mathfrak{T})$ is a profinite space,
\item [(ii)] $Comp(X) =\{C^X_M: M\in \mathfrak{mr}(C(X))\}.$ 

\end{itemize}
}
\end{pro}
\begin{proof} Item (i) is \cite[Theorem 3.8]{AT}  while (ii) is \cite[Remark 3.11]{AT}.
\end{proof}

When the space $X$ is clear  the set $C^X_M$ defined in \eqref{cxm} shall be denoted by $C_M.$ It is not difficult to see that $\alpha[C_M]=C_{M\alpha^{-1}},$ for any $\alpha \in H(X).$

\subsubsection{The construction of a functor $H: \mathscr{SAK}_G\to  \mathscr{SAP}_G$}

As mentioned  at the beginning of this section, we wish to define a functor $H: \mathscr{SAK}_G\to  \mathscr{SAP}_G$ which is left adjoint to the inclusion functor $\mathscr{SAP}_G\to \mathscr{SAK}_G. $ The next result tells us how $H$ acts at the level of objects.
\begin{lem}\label{lema 5.1} Let  $\alpha$ be a separately continuous action of a topological  group $G$  on a compact Hausdorff space $X$, $R=C(X)$ and $g\in G.$ Then the following assertions hold. \begin{enumerate}
\item [(i)]The map
\begin{equation}\label{hob}H(\alpha): G\times Comp(X)\ni (g,C_M)\mapsto C_{M\alpha_{g^{-1}}}\in  Comp(X),\end{equation} is a separately continuous  action of $G$ on $Comp(X),$
\item[(ii)]	If $G$ is Hausdorff Baire and the set $Clop(X)$ is countable, then $H(\alpha)$ is continuous. 
\end{enumerate}
\end{lem}
\begin{proof} (i) We first check that $H(\alpha)$ is an action.  Notice that for $(g,C_M)\in G\times Comp(X)$ then  $C_{M\alpha _{g^{-1}}}\in Comp(X).$	 Now to check that $H(\alpha)$ is well defined, take $g\in G, M,N\in \mathfrak{mr}(R)$ such that, $C_M=C_N$ and  $x\in C_{M\alpha _{g^{-1}}}$. Then $M\alpha _{g^{-1}}\subseteq \mathfrak{m}_x,$ but $\alpha_{g^{-1}}(x)\in C_M$ because for $f\in M$, $f(\alpha_{g^{-1}}(x))=(f\circ\alpha_{g^{-1}})(x)=0$, then $\alpha_{g^{-1}}(x)\in C_N,$ which implies 
$x\in C_{N\alpha _{g^{-1}}}$ and $C_{M\alpha _{g^{-1}}}\subseteq C_{N\alpha _{g^{-1}}}$. In a similar way one shows $C_{M\alpha _{g^{-1}}}\supseteq C_{N\alpha _{g^{-1}}}$ and $H(\alpha)$ is well defined.  It is clear that  $H(\alpha)(1_G, C_M)=C_M,$ for any $M\in \mathfrak{mr}(R).$  Take  $g,h\in G$ and  $M\in \mathfrak{mr}(R)$ then
$$H(\alpha)[g, H(\alpha)(h, C_M)]=H(\alpha)(g,C_{M\alpha _{h^{-1}}})=C_{(M\alpha _{h^{-1}})\alpha_{g^{-1}}}=C_{M\alpha _{(gh)^{-1}}}=H(\alpha)(gh,C_M).$$ Now we prove that $H(\alpha)$ is separately continuous. Fix $g\in G$ and consider the map $H_{\alpha_g}: Comp(X)\ni C_M\mapsto C_{M\alpha_{g^{-1}}}\in Comp(X).$ To show that it is continuous, let $U\in Clop(X),$ then  
\begin{align*}
		H_{\alpha_g}^{-1}[\mathscr{O}_U] &=\{C_M\in Comp(X): C_{M\alpha _{g^{-1}}}\in \mathscr{O}_U\}\\
&=\{C_M\in Comp(X): \chi_U\notin M\alpha _{g^{-1}}\}\\
&=\{C_M\in Comp(X): \chi_{\alpha_{g^{-1}}(U)}\notin M\}\\
&=\mathscr{O}_{\alpha_{g^{-1}}(U)},
\end{align*} and the result follows from the fact that $\{\mathscr{O}_U\}_{U\in Clop(X)}$ is a basis for $Comp(X)$. Now fix $C_M\in Comp(X)$ then we have a map $H^{C_M}:G\ni g\mapsto C_{M\alpha_{g^{-1}}}\in Comp(X).$ Take $x\in C_M,$ since $\alpha$ is a separately continuous action 
the map $\alpha^x:G\ni g\mapsto g\cdot x\in X$ is continuous, furthermore, if $\psi_X$ 
is the projection, then  $\psi_X(g\cdot x)=\alpha_g[C_M],$ because  $g\cdot x\in \psi_X(g\cdot x)\cap \alpha_g[C_M]=\psi_X(g\cdot x)\cap C_{M\alpha_{g^{-1}}},$  thus $\psi_X\circ \alpha^x= H^{C_M}$ and $H^{C_M}$ is continuous.

(ii) It follows from (i) of Proposition \ref{T} that  $(Comp(X),\mathfrak{T})$ is a profinite space, then $Comp(X)$ it is metrizable because the set  $\{\mathscr{O}_U\}_{U\in Clop(X)}$ is countable, then the result follows from (i) above   and  \cite[Theorem 3.1]{GPU}.
	\end{proof}

Now we show how $H$ acts at the level of morphisms. Let  $\alpha=\lbrace \alpha_g:X\rightarrow X\rbrace_{g\in G}$ and  $\beta=\lbrace \beta_g:Y\rightarrow Y\rbrace_{g\in G}$ be objects in $\mathscr{SAK}_G$ and $f:\alpha \to \beta$ a morphism.  Given 
 $C_M\in Comp(X)$ exists a unique $C\in Comp(Y)$ containing  $f(C_M),$ that is, there is   $N\in\mathfrak{mr}(C(Y))$ such that  $f(C_M)\subseteq C_N.$ With this in mind 
we define a map  
\begin{equation}\label{hmor}H(f):Comp(X)\ni C_M\mapsto C_N\in  Comp(Y).\end{equation} 
Observe that $H(f)$ is continuous because for   $U\in Clop(Y)$, one has  $H(f)^{-1}[\mathscr{O}_U]=\mathscr{O}_{f^{-1}(U)}.$ The following lemma guarantees that   $H(f)$ is a morphism in  $\mathscr{SAP}_G.$ 

\begin{lem}\label{lema 5.2} With the notations above  the map $H(f)$ is a morphism from $H(\alpha)$ to $H(\beta)$ in $\mathscr{SAP}_G$ and the diagram
	\[\xymatrix {
		X\ar[r]^{f}  \ar[d]_{\psi_X} & Y\ar[d]^{\psi_Y} \\
		Comp(X)\ar[r]_{H(f)} & Comp(Y) 
	}\]
is commutative, where the vertical arrows are the canonical projections.
\end{lem}
\begin{proof} First of all as observed in \cite[p 155]{AT} the commutativity of the diagram follows from  \cite[Remark 3.11]{AT}. Moreover, since $H(f)$ is continuous we only need to show that it is a  $G$-map. Take  $C_M\in Comp(X)$ and $g\in G,$ by the definition of $H_{\alpha_g}$ we have $H(f)(H_{\alpha_g}(C_M))=H(f)(C_{M\alpha_{g^{-1}}})$. Moreover,  let  $N\in \mathfrak{mr}(C(Y))$ such that  $f(C_M)\subseteq C_N$ then $H(f)(C_M)=C_N,$ and  $H_{\beta_g}(H(f)(C_M))=H_{\beta_g}(C_N)=C_{N\beta_{g^{-1}}}.$ Thus we need to show  $f(C_{M\alpha_{g^{-1}}})\subseteq C_{N\beta_{g^{-1}}}.$ 
 Let  $x\in C_{M\alpha_{g^{-1}}}$, then $M\alpha_{g^{-1}}\subseteq \mathfrak{m}_x,$ and $M\subseteq \mathfrak{m}_{\alpha_{g^{-1}(x)}}$ which gives  $\alpha_{g^{-1}}(x)\in C_M.$ Also, $f(\alpha_{g^{-1}}(x))\in f(C_M)\subseteq C_N,$ but $f(\alpha_{g^{-1}}(x))=\beta_{g^{-1}}(f(x)),$ therefore  $N\subseteq\mathfrak{m}_{\beta_{g^{-1}}(f(x))}.$ We have proved that $N\beta_{g^{-1}}\subseteq\mathfrak{m}_{f(x)},$ that is $f(x)\in C_{N\beta_{g^{-1}}}$ and  $f(C_{M\alpha_{g^{-1}}})\subseteq C_{N\beta_{g^{-1}}},$ from this we conclude that  $H(f)$ is a $G$-map, as desired.
\end{proof}

\begin{pro}\label{prop 5.2} The map 
	determined by equations \eqref{hob} and \eqref{hmor} is a covariant functor  $H:\mathscr{SAK}_G\rightarrow \mathscr{SAP}_G.$
\end{pro}
\begin{proof}
Let $\alpha$ and $\beta$ be objects in  $\mathscr{SAK}_G$ acting on the spaces $X$ and $Y$ respectively,  and $f\in Hom_{\mathscr{SAK}_G}(\alpha,\beta).$  
  We have that  $H(f)\in Hom_{\mathscr{SAP}_G}(H(\alpha),H(\beta))$ by Lemma \ref{lema 5.1} and Lemma \ref{lema 5.2}. Also,  if $id_X:X\rightarrow X$ is the identity morphism, then    $H(id_X)=id_{Comp(X)}.$ Thus we only need to check that for $\gamma^Z \in Ob(\mathscr{SAK}_G)$ and  $g\in Hom_{\mathscr{SAK}_G}(\beta,\gamma),$ the equality $H(g\circ f)=H(g)\circ H(f)$ holds. Indeed, take $C_M\in Comp(X)$ then there are  $N_1\in \mathfrak{mr}(C(Y))$ and  $N_2\in\mathfrak{mr}(C(Z))$ such that  $g(f(C_M))\subseteq g(C_{N_1})\subseteq C_{N_2},$ this gives $H(g\circ f)(C_M)=C_{N_2}.$ On the other hand exists $N_3\in\mathfrak{mr}(C(Z))$ for which  $g(C_{N_1})\subseteq C_{N_3}$ and  $H(g)(H(f)(C_M))=H(g)(C_{N_1})=C_{N_3},$ thus $C_{N_2}=C_{N_3}$ and we conclude that $H$ is  a covariant functor.
\end{proof}
Before proving the reflectivity  of  $\mathscr{SAP}_G$ in $\mathscr{SAK}_G$ we need to fix  some notations and facts.  Let $\mathscr{C}, \mathscr{D}$ be two categories, $F:\mathscr{C}\to \mathscr{D}$ and  $T:\mathscr{D}\to \mathscr{C}$ be two covariant functors   and {\bf Set}  be the category of sets. Denote by $Hom_{\mathscr{D}}(F(-),-):\mathscr{C}^{op}\times\mathscr{D}\rightarrow {\bf Set}$ the functor sending an object $(X,Y)$ into $Hom_{\mathscr{D}}(F(X),Y)$ and a morphism  $(f^{op}, g) :(X, Y)\to (X', Y')$ to the map 
$Hom_{\mathscr{D}}(F(f),g):Hom_{\mathscr{D}}(F(X),Y)\ni m\mapsto  g\circ m\circ F(f)\in  Hom_{\mathscr{D}}(F(X'),Y').$   In a similar way one defines  $Hom_{\mathscr{C}}(-,T(-)):\mathscr{C}^{op}\times\mathscr{D}\rightarrow {\bf Set},$ sending  the object $(X,Y)$  to $Hom_{\mathscr{C}}(X,T(Y))$ and a morphism  $(f^{op}, g) :(X, Y)\to (X', Y')$ to 
$Hom_{\mathscr{C}}(f^{op},T(g)):Hom_{\mathscr{C}}(X,T(Y))\ni m\mapsto  T(g)\circ m\circ f\in  Hom_{\mathscr{C}}(X',T(Y')).$

\begin{rem}
If $\alpha^X$ is an object in $\mathscr{SAK}_G$ it is not difficult to show that the projection  $\psi_X:X\rightarrow Comp(X)$ is a morphism between $\alpha^X$ and $H(\alpha)^{Comp(X)}$ in $\mathscr{SAK}_G.$ Further, $\psi_X$ is an  isomorphism provided that  $X$ is totally  disconnected. 
\end{rem}

By abuse of notation the set  $Hom_{\mathscr{SAK}_G}(\alpha^X,\beta^Y),$ shall be denoted by $Hom_{\mathscr{SAK}_G}(X,Y),$ for any $\alpha^X,\beta^Y$ objects in $\mathscr{SAK}_G.$

Now we prove one of the main results of this section.
\begin{teo} \label{reft}The category
 $\mathscr{SAP}_G$ is  reflective  in $\mathscr{SAK}_G$.
\end{teo}
\begin{proof}  We shall show that  $\tau:Hom_{\mathscr{SAP}_G}(H(-),-)\Rightarrow Hom_{\mathscr{SAK}_G}(-,\iota(-))$ sending  an object  $(\alpha^X,\beta^Y)\in (\mathscr{SAK}_G)^{op}\times(\mathscr{SAP}_G)$ to 
$$\tau_{(\alpha^X,\beta^Y)}:Hom_{\mathscr{SAP}_G}(Comp(X),Y) \ni g \mapsto g\circ \psi_X \in Hom_{\mathscr{SAK}_G}(X,Y),$$
being  $\iota: \mathscr{SAP}_G\to \mathscr{SAK}_G$ the inclusion functor; is a natural isomorphism. The injectivity of  $\tau_{(\alpha^X,\beta^Y)}$ follows from the fact that   $\psi_X$ is surjective. Take  $g\in Hom_{\mathscr{SAK}_G}(X,Y)$ and note that $\psi_Y$ is a homeomorphism since $Y$ is profinite. 
Setting $h:=\psi_Y^{-1}\circ H(g)\in Hom_{\mathscr{SAP}_G}(Comp(X),Y)$ one has from  Lemma \ref{lema 5.2} that  $\tau_{(\alpha^X,\beta^Y)}(h)=g,$ and $\tau_{(\alpha^X,\beta^Y)}$ is surjective. 
It remains to show that the following diagram is commutative
	\[\xymatrix {
	Hom_{\mathscr{SAP}_G}(Comp(X),Y) \ar[r]_{f_2\circ(-)\circ H(f_1)}  \ar[d]_{\tau_{(\alpha^X,\beta^Y)}} & Hom_{\mathscr{SAP}_G}(Comp(X'),Y')\ar[d]^{\tau_{(\mu^{X'},\gamma^{Y'})}} \\
		Hom_{\mathscr{SAK}_G}(X,Y) \ar[r]_{f_2\circ(-)\circ f_1} & Hom_{\mathscr{SAK}_G}(X',Y'), 
	}\]
 where $(\alpha^X,\beta^Y), (\mu^{X'},\gamma^{Y'})$  are objects in $(\mathscr{SAK}_G)^{op}\times(\mathscr{SAP}_G), f_1^{op}:\alpha^X\rightarrow \mu^{X'}$ and $f_2: \beta^Y\rightarrow \gamma^{Y'}$ are morphisms in $(\mathscr{SAK}_G)^{op}$ and $\mathscr{SAP}_G$ respectively. Let  $g:Comp(X)\rightarrow Y$ a morphism in $\mathscr{SAP}_G$ then
	$$\tau(\mu^{X'},\gamma^{Y'})\circ Hom_{\mathscr{SAP}_G}(H(f_1),f_2)(g)=f_2\circ g\circ H(f_1)\circ\psi_{X'},$$
	but by  Lemma \ref{lema 5.2} we have  $\psi_X\circ f_1=H(f_1)\circ \psi_{X'},$ hence 
	$$f_2\circ g\circ H(f_1)\circ \psi_{X'}=f_2\circ g\circ \psi_X\circ f_1=Hom_{\mathscr{SAK}_G}(f_1^{op},f_2)\circ \tau(\alpha^X,\beta^Y)(g).$$ We have proved that  $\tau$ is a natural isomorphism, therefore   $\mathscr{SAP}_G$ is reflective in $\mathscr{SAK}_G$ which finishes the proof.\end{proof}

Let $\mathscr{AKC}_G$ be the  subcategory of  $\mathscr{SAK}_G$  having as objects continuous  actions on compact Hausdorff spaces having countable clopen sets, analogously one defines the subcategory  $\mathscr{APC}_G$ of  $\mathscr{SAP}_G$.  We finish our work with the next.
\begin{pro}\label{ref2} Let $G$ be a Hausdorff and Baire group. Then the category
 $\mathscr{APC}_G$ is  reflective in $\mathscr{AKC}_G$.
\end{pro}
\begin{proof} Let $X$ be a compact Hausdorff space having countable clopen sets, since $\psi_X:X\rightarrow Comp(X)$ is a continuous surjection, then  $Comp(X)$ has countable clopen sets and  equations \eqref{hob} and \eqref{hmor} and item (ii) of Lemma \ref{lema 5.1}  determine a functor $H:\mathscr{AKC}_G\rightarrow \mathscr{APC}_G,$ also  the proof that 
\begin{equation}\label{taup}\mathcal{T}:Hom_{\mathscr{APC}_G}(H(-),-)\Rightarrow Hom_{\mathscr{AKC}_G}(-,\iota(-)),\end{equation} sending  an object  $(\alpha^X,\beta^Y)\in (\mathscr{AKC}_G)^{op}\times(\mathscr{APC}_G)$ to 
$$\mathcal{T}_{(\alpha^X,\beta^Y)}:Hom_{\mathscr{APC}_G}(Comp(X),Y) \ni g \mapsto g\circ \psi_X \in Hom_{\mathscr{AKC}_G}(X,Y),$$
being  $\iota: \mathscr{APC}_G\to \mathscr{AKC}_G$ the inclusion functor; is a natural isomorphism, is the same as in Theorem \ref{reft}. Thus,  $H$ is left adjoint to the inclusion functor $\iota: \mathscr{APC}_G\to \mathscr{AKC}_G.$ 
\end{proof}

	\end{document}